\theoremstyle{plain}
\newtheorem{theorem}{\textbf{Theorem}}[section]
\newtheorem{remark}{\textbf{Remark}}[section]
\def\uu{\textbf{u}}
\def\ww{\textbf{w}}
\def\vv{\textbf{v}}
\def\nn{\textbf{n}}
\def\n{\textbf{n}}
\def\R3{\mathbb{R}^3} 
\def\R{\mathbb{R}}
\def\F2o{\overline{F_2}}
\newcommand{\beq}[0]{\begin{equation}}
\newcommand{\eeq}[0]{\end{equation}}
\newcommand{\VV}[0]{{V_{div}}}
\newcommand{\HH}[0]{{G_{div}}}
\newcommand{\ff}[0]{\varphi}
\newcommand{\barf}[0]{\overline{\varphi}}
\newcommand{\dert}[0]{\frac{d}{dt}}
\newcommand{\tmu}[0]{\tilde{\mu}}
\def\blfootnote{\gdef\@thefnmark{}\@footnotetext}
\begin{document}
\title{Erratum: ``On the nonlocal Cahn-Hilliard-Brinkman \\ and Cahn-Hilliard-Hele-Shaw systems''\\[1pt] [Comm. Pure Appl. Anal. 15 (2016), 299-317]}

\author{
{\sc Francesco Della Porta}\footnote{Mathematical Institute, University of Oxford, Oxford OX2 6GG, UK, \textit{dellaporta@maths.ox.ac.uk}},
\
{\sc Maurizio Grasselli}\footnote{Dipartimento di Matematica, Politecnico di Milano, Milano 20133, Italy, \textit{maurizio.grasselli@polimi.it}}
}


\maketitle
%
%

\bigskip

 \centerline{(Communicated by the associate editor name)}

%
\bigskip

\noindent
\noindent
In this note, we want to highlight and correct an error in \cite[Prop.2.4]{DPG2} which has consequences on the proof of \cite[Thm.6.1]{DPG2}.
Referring to \cite{DPG2} for the notation, the correct statement in \cite[Prop.2.4]{DPG2} is that $\uu \in L^2(0,T;[H^1(\Omega)]^d)$ and not $\uu \in L^2(0,T;\VV)$ as incorrectly written. Therefore we have $\vv(t)=\uu(t)- \uu_\nu(t)\in [H^1(\Omega)]^d$ for almost any $t\in (0,T)$ and the boundary trace of $\vv(t)$ is not necessarily zero. Estimates as the one in \cite[Thm.6.1]{DPG2} are in general difficult to obtain due to the presence of a boundary layer. A common approach to obtain such estimates is to introduce a corrector so that the difference between the solution and the corrector has zero trace (see, e.g., \cite{TemamCorrector}). Here we devise a simpler way to obtain an estimate quite similar to the one reported in \cite[Thm.6.1]{DPG2} without introducing a corrector. However, the order of convergence with respect to $\nu$ is no longer $\frac12$. More precisely, the corrected result reads as follows

\begin{theorem}
\label{t:close 2}
Let (H0), (H2)-(H4), (H8) hold. Suppose $\nu\in (0,1]$, $\eta>0$ constant, $\mathbf{h}=\mathbf{0}$, and
$J$ either be admissible or $J\in W^{2,1}(\R^d)$. Take $\ff_0^\nu,\ff_0\in L^\infty(\Omega)$ and
$$R:=\sup_{\nu> 0}\{\|\ff_0^\nu\|_{L^\infty},\|\ff_0\|_{L^\infty}\}<\infty.$$
Let $(\ff_\nu,\uu_\nu)$ be the unique weak solution to \cite[(1.2)-(1.3)]{DPG2} with initial datum $\ff_0^\nu$,
and $(\ff, \uu)$ be the unique solution to \cite[(1.4)-(1.5)]{DPG2} with initial datum $\ff_0$.
Then, for any given $T>0$, there exists $C=C({R,T,\eta})>0$ such that, for every $\delta\in (0,\frac12)$,
$$\|\ff_\nu(t)-\ff(t)\|_\#^2+\int_0^t\|\uu_\nu(y)-\uu(y)\|^2\,d y\leq
\big(\|\ff_0^\nu-\ff_0\|_\#^2+|\barf_0^\nu-\barf_0|\big)e^{C}+C\nu^{\frac14 - \frac\delta 2},$$
for each $t\in [0,T]$. In particular, if $\ff_0^\nu=\ff_0$, then
$\ff_\nu\to \ff$ in $L^\infty(0,T;V')$ and $\uu_\nu \to \uu$ in $L^2(0,T;\HH)$ as $\nu\to 0.$
\end{theorem}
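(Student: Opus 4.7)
The plan is to follow the coupled Gronwall scheme of \cite[Thm.6.1]{DPG2}: derive one estimate for $\psi := \ff_\nu - \ff$ in the $\|\cdot\|_\#$ norm and another for $\vv := \uu_\nu - \uu$ in $L^2(\Omega)$, then close via Gronwall. The phase field part is unchanged from \cite{DPG2}: subtract the two Cahn--Hilliard equations, test with the dual representative of $\psi - \bar\psi$ that defines $\|\cdot\|_\#$, and use the uniform $L^\infty$ bounds on $\ff_\nu, \ff$ together with the hypotheses on $J$ to obtain
$$\dert\|\psi\|_\#^2 \;\leq\; C\|\psi\|_\#^2 + C|\barf_0^\nu - \barf_0| + \tfrac{\eta}{4}\|\vv\|^2 + \mathrm{l.o.t.}$$

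The genuinely new difficulty is the control of $\vv$. The obstruction is precisely the one identified at the start of the note: $\uu$ belongs only to $[H^1(\Omega)]^d$ with $\uu\cdot\n=0$ on $\partial\Omega$ but with nonvanishing tangential trace, so $\vv\notin\VV$ and is not admissible as a test function in the Brinkman weak formulation. Note nevertheless that $\vv$ still satisfies $\dv\vv=0$ in $\Omega$ and $\vv\cdot\n=0$ on $\partial\Omega$. A natural remedy would be to subtract from $\vv$ a corrector with nonzero tangential trace, recovering the sharp rate $\nu^{1/2}$ (cf.~\cite{TemamCorrector}); to avoid this we instead localize the failure of the boundary condition. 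Fix $\beta\in(0,\tfrac12)$, set $\Sigma_\beta:=\{x\in\Omega:\dist(x,\partial\Omega)<\nu^\beta\}$, and choose a smooth cutoff $\chi_\beta$ with $\chi_\beta=1$ on $\partial\Omega$, $\chi_\beta=0$ away from $\Sigma_\beta$, and $|\nabla\chi_\beta|\leq C\nu^{-\beta}$. Testing the Brinkman formulation against $\uu_\nu\in\VV$ and the Hele--Shaw equation against $\uu$ and expanding $\eta\|\vv\|^2=\eta\|\uu_\nu\|^2-2\eta(\uu_\nu,\uu)+\eta\|\uu\|^2$, the cross term $\eta(\uu_\nu,\uu)$ is the only one in which the forbidden pairing enters, and it is treated by splitting $\uu=\chi_\beta\uu+(1-\chi_\beta)\uu$: the zero-trace factor $(1-\chi_\beta)\uu$ enters the Brinkman weak formulation legitimately (modulo the non-divergence-free defect $-\nabla\chi_\beta\cdot\uu$, which is supported in $\Sigma_\beta$ and is handled by the pressure), while the contribution of $\chi_\beta\uu$ is confined to the strip.

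The strip error is estimated using two complementary bounds: H\"older's inequality on a thin shell combined with the uniform $H^1$ bound on $\uu$ (from the uniform $L^\infty$ bound on $\ff$) yields $\|\uu\|_{L^2(\Sigma_\beta)}\leq C\nu^{\beta/2}$, while the standard Brinkman energy estimate $\nu^{1/2}\|\nabla\uu_\nu\|_{L^2(\Omega)}\leq C$ absorbs the contributions of $\nabla\chi_\beta\sim\nu^{-\beta}$. Balancing the two produces a net error of order $\nu^{\beta/2}$, and the choice $\beta=\tfrac12-\delta$ gives the announced rate $\nu^{1/4-\delta/2}$. Summing with the phase field estimate, absorbing $\tfrac{\eta}{2}\|\vv\|^2$ on the left via Young's inequality, and applying Gronwall to $\|\psi(t)\|_\#^2+\int_0^t\|\vv\|^2\,dy$ yields the claimed bound; the convergence statement for $\ff_0^\nu=\ff_0$ is then immediate. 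The main obstacle throughout is extracting algebraic smallness from the boundary layer without a corrector: a direct $L^2$ pairing of the distributional momentum equations with $\vv$ collapses, through the pressure cancellation enforced by $\dv\vv=0$ and $\vv\cdot\n=0$, to the Hele--Shaw equation tested with $\vv$, which is tautological and gives no information on $\|\vv\|^2$. It is only the cutoff localization that restores the missing structure, and this mechanism is exactly what forces the degradation from the original $\nu^{1/2}$ to $\nu^{1/4-\delta/2}$.
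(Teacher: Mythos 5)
Your overall architecture (a phase-field estimate for $\psi$ plus a separate bound on $\vv$, closed by Gronwall) matches the paper's, but your treatment of the velocity difference is genuinely different: the paper uses no boundary-layer cutoff at all. It first upgrades $\uu_\nu$ to $[H^2(\Omega)]^d$ via the Stokes problem $\nu\mathbf{A}\uu_\nu=-\eta\uu_\nu+\mathbf{P}(\ff_\nu\nabla\mu_\nu)$ and derives the uniform bounds $\nu\|\uu_\nu\|_{[H^2]^d}+\sqrt{\nu}\,\|\uu_\nu\|_{[H^1]^d}\le C$; it then tests the difference of the (now strong) momentum equations directly with $\vv$, so that the integration by parts produces the boundary integral $\nu\int_{\partial\Omega}\vv^T\nabla\uu_\nu\cdot\nn=-\nu\int_{\partial\Omega}\uu^T\nabla\uu_\nu\cdot\nn$, which is estimated by a trace theorem plus interpolation of $\nu\nabla\uu_\nu$ between $H^1$ and $H^2$, yielding $C\nu^{\frac14-\frac\delta2}\|\uu\|_{[H^1]^d}$. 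In particular, the $\delta$-loss in the stated rate comes from the embedding $H^{\frac12+\delta}(\Omega)\hookrightarrow H^{\delta}(\partial\Omega)$, not from the width of a strip. So your direct pairing with $\vv$ is not ``tautological''; it is exactly what the paper does once $H^2$ regularity is in hand.

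Your cutoff route is a legitimate alternative in principle, but as written it has a concrete gap precisely where you wave at it: the divergence defect of the test function $(1-\chi_\beta)\uu$ is \emph{not} ``handled by the pressure'' for free. Testing the Brinkman system with a non-solenoidal field reintroduces the pressure term $(\pi_\nu,\dv[(1-\chi_\beta)\uu])=(\pi_\nu,\nabla\chi_\beta\cdot\uu)$, and the only estimate available without further input is $\|\pi_\nu\|\,\|\nabla\chi_\beta\|_{L^\infty}\|\uu\|_{L^2(\Sigma_\beta)}\le C\nu^{-\beta}\nu^{\beta/2}=C\nu^{-\beta/2}$, which diverges as $\nu\to0$. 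To close this you must either (i) integrate by parts, $(\pi_\nu,\dv(\chi_\beta\uu))=-(\nabla\pi_\nu,\chi_\beta\uu)$ (using $\uu\cdot\nn=0$), and bound $\|\nabla\pi_\nu\|\le\nu\|\Delta\uu_\nu\|+\eta\|\uu_\nu\|+\|\ff_\nu\nabla\mu_\nu\|$ --- which requires precisely the uniform estimate $\nu\|\uu_\nu\|_{[H^2]^d}\le C$ that your proposal never establishes --- or (ii) correct the test function by a Bogovskii field $\ww_\beta$ with $\dv\ww_\beta=\nabla\chi_\beta\cdot\uu=\dv(\chi_\beta\uu)$ together with the negative-norm bound $\|\ww_\beta\|\le C\|\chi_\beta\uu\|$. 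Either fix must be spelled out; as it stands the worst term of your expansion is unestimated. Two secondary points: the claim $\|\uu\|_{L^2(\Sigma_\beta)}\le C\nu^{\beta/2}$ does not follow from H\"older plus the $H^1$ bound in dimension three ($H^1\hookrightarrow L^6$ only gives $\nu^{\beta/3}$, which degrades the final rate below the stated one); you need instead $\|\uu\|^2_{L^2(\Sigma_\beta)}=\int_0^{\nu^\beta}\|\uu\|^2_{L^2(\{\dist(x,\partial\Omega)=s\})}\,ds\le C\nu^{\beta}\|\uu\|^2_{[H^1]^d}$, by a trace inequality uniform over the level sets. Finally, the cross term $\eta(\uu_\nu,\uu)$ is not where the ``forbidden pairing'' lives --- it is computable from the Darcy equation tested with $\uu_\nu\in\HH$; what actually forces you to test Brinkman against (a truncation of) $\uu$ is the need to factor the forcing difference as $(\ff_\nu\nabla\mu_\nu-\ff\nabla\mu,\vv)$ so that the $C\|\vv\|\,\|\psi\|$ structure of the Gronwall argument survives.
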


\begin{proof}
We first notice that the Brinkman equation can be rewritten as follows (see e.g., \cite[eq.(3.40)]{DPG2})
\beq
\label{BrinkGood}
\nu \mathbf{A} \uu_\nu = - \eta \uu_\nu + \mathbf{P}\left(\ff_\nu\nabla\mu_\nu\right),\qquad \text{ a.e. in } \Omega\times(0,T),
\eeq
where $\mathbf{A}$ is the Stokes operator and $\mathbf{P}$ is the Leray projector. Note that the right-hand side belongs to $\HH$ for almost any $t\in(0,T)$. Thus by standard theory (cf. \cite[Chap.IV, Sec.5]{BOY}), we know that $\uu_\nu\in [H^2(\Omega)]^d$. Consequently, we can write
\beq
\label{BrinkGood2}
-\nu (\Delta \uu_\nu,\ww) + {\eta}(\uu_\nu,\ww)=(\ff_\nu\nabla\mu_\nu ,\ww),\qquad \forall\,\ww \in \HH, \text{ a.e. } t\in(0,T).
\eeq
Recalling now  \cite[eq.(3.40)]{DPG2}, we have
\beq
\label{basicidentity}
(\mathbf{P}(\ff_\nu\nabla\mu_\nu),\ww) = ( (\nabla J\ast\ff_\nu)\ff_\nu,\ww)-\frac12(\ff_\nu^2\nabla a,\ww).
\eeq
Therefore, testing \eqref{BrinkGood} with $\nu \mathbf{A}\uu_\nu$ and using Cauchy-Schwartz and Young inequalities, on account of \eqref{basicidentity}, we get
\beq
\label{H2 reg 0}
\frac 12\nu^2 (\mathbf{A} \uu_\nu, \mathbf{A} \uu_\nu) + \eta\nu\|\nabla \uu_\nu\|^2\leq c\|\ff_\nu\|^2_{L^4} \leq C_R,\qquad \text{ a.e. } t\in(0,T),
\eeq
from which we deduce, thanks to \cite[Proposition IV.5.9]{BOY}, that
\beq
\label{H2 reg}
\nu\|\uu_\nu\|_{[H^2]^d} + \sqrt{\nu}\|\uu_\nu\|_{[H^1]^d} \leq C_{R,\eta},\qquad\text{a.e. } t\in(0,T).
\eeq
Here we have also used \cite[Proposition 2.1]{DPG2} for the last inequality in \eqref{H2 reg 0}.

Let us now set $\psi=\ff_\nu-\ff$, $\tilde\mu=\mu_\nu-\mu$ and $\vv=\uu_\nu-\uu$. After subtracting the Darcy equation \cite[(2.9)]{DPG2}
from \eqref{BrinkGood2}, and testing the resulting identity with $\vv$ we get
\[
-\nu ( \Delta \uu_\nu,\vv)  + \|\sqrt{\eta}\vv\|^2= \mathcal{K},
\]
where
\[
 \mathcal{K} :=( \tilde \mu\nabla \ff_\nu + \mu \nabla \psi,\vv)= ( \nabla J\ast\ff_\nu,\psi\vv)+( \nabla J\ast\psi,\ff\vv)-\frac12((\ff_\nu+\ff)\psi\nabla a,\vv).
\]
Integrating by part the viscous term and adding
$-\nu ( \nabla \uu, \nabla \vv )$ to both sides of the resulting identity gives
\beq
\label{DiffNu}
\nu \| \nabla \vv \|^2 + \|\sqrt{\eta}\vv\|^2= \mathcal{K}
-\nu(  \nabla \uu,\nabla \vv)+ \nu\int_{\partial\Omega} \vv^T\nabla\uu_\nu\cdot\nn.
\eeq
Observe that
\beq
\label{trace}
\nu \int_{\partial\Omega} \vv^T\nabla\uu_\nu\cdot\nn = -\nu \int_{\partial\Omega} (\uu^T\nabla\uu_\nu\cdot\n)\leq \|\uu\|_{[L^2(\partial\Omega)]^d} \|\nu\nabla\uu_\nu\|_{[L^2(\partial\Omega)]^d}.
\eeq
On account of the smoothness of the domain $\Omega$, we can use \cite[Prop. 3.8]{Sobolev} and deduce
$$
\nu \|\nabla\uu_\nu\|_{[L^2(\partial\Omega)]^d} \leq \nu \|\nabla\uu_\nu\|_{[H^\delta(\partial\Omega)]^d}\leq c\nu\|\nabla\uu_\nu\|_{[H^{\frac12+\delta}]^d},
$$
with $\delta>0$ arbitrary. Then interpolation yields
$$
\nu\|\nabla\uu_\nu\|_{[H^{\frac12+\delta}]^d} \leq c (\nu\|\uu_\nu\|_{[H^2]^d})^{\frac12+\delta}(\nu\|\uu_\nu\|_{[H^1]^2})^{\frac12-\delta},
$$
for $\delta\in (0,\frac12)$.
Therefore, exploiting \eqref{H2 reg} twice and using a standard trace theorem, from \eqref{trace} we deduce
$$\nu \int_{\partial\Omega} \vv^T\nabla\uu_\nu\cdot\nn \leq C \|\uu\|_{[H^1]^d} (\nu\|\uu_\nu\|_{\VV})^{\frac12-\delta}\leq C\nu^{\frac14-\frac\delta2} \|\uu\|_{[H^1]^d}.
$$
Thus, using also
$$
-\nu(  \nabla \uu,\nabla \vv) \leq \nu\|\nabla\uu\|^2+\nu\|\nabla\vv\|^2,$$
we have that \eqref{DiffNu} becomes
\beq
\label{DiffNu2}
\eta\|\vv\|^2\leq \mathcal{K}
+(\nu+C\nu^{\frac14-\frac\delta2}) \|\uu\|^2_{[H^1]^d}.
\eeq
On the other hand, arguing as in \cite[eq. (5.16)]{DPG2}, we find
\[
\mathcal{K}
\leq \max{(\|\nabla a\|_{L^\infty},\,\|\nabla J\|_{L^1})}\|\vv\|\bigl(\|\ff_\nu\|_{L^\infty}+\|\ff\|_{L^\infty}\bigr)\|\psi\|
\leq C\|\vv\|\|\psi\|.
\]
Hence, we infer from \eqref{DiffNu2} that ($\nu\leq1$)
$$
\eta\|\vv\|^2\leq C\|\vv\|\|\psi\|+C\nu^{\frac14-\frac\delta2}\|\uu\|^2_{[H^1]^d}
$$
and this implies
\begin{equation}
\label{lavV}
\|\vv\|\leq C\bigl(\|\psi\|+ {\nu^{\frac18-\frac\delta4}}\|\uu\|_{[H^1]^d}\bigr).
\end{equation}
We can now proceed as in the original proof of \cite[Thm 6.1]{DPG2}. More precisely, we have
(cf. \cite[Proof of Prop.2.2]{DPG2})
\begin{equation*}
\frac{1}{2}\dert \|\psi-\bar\psi\|^2_{-1}+ (\tmu,\psi-\bar\psi) = I_1+I_2,
\end{equation*}
where
$$I_1=(\textbf{v}\ff_\nu,\nabla (-\Delta)^{-1}(\psi-\bar\psi)),\qquad I_2=(\textbf{u}\psi,\nabla (-\Delta)^{-1}(\psi-\bar\psi)).$$
Recalling \cite[Proof of Prop. 2.2]{DPG2} we deduce
\begin{align*}
&\frac{1}{2}\dert \|\psi-\bar{\psi}\|^2_{-1}+ \frac{c_0}{4}\|\psi\|^2\leq
N\|\vv\|\|\psi-\bar{\psi}\|_\#+
N\|\psi-\bar{\psi}\|_\#^2
+c\bar{\psi}^2+N|\bar{\psi}|.
\end{align*}
Thus, taking \eqref{lavV} into account, we end up with
\begin{equation}
\label{grou}
\frac{1}{2}\dert \|\psi-\bar{\psi}\|^2_\#+ \frac{c_0}{8}\|\psi\|^2\leq
N\|\psi-\bar{\psi}\|_\#^2
+N|\bar{\psi}|+N\nu^{\frac14-\frac\delta2}\|\uu\|^2_{[H^1]^d}.
\end{equation}
An application of the Gronwall Lemma on $[0,T]$, on account of \cite[Prop. 2.4]{DPG2}, provides
$$\|\ff_\nu(t)-\ff(t)\|_\#^2\leq
\big(\|\ff_0^\nu-\ff_0\|_\#^2+|\barf_0^\nu-\barf_0|\big)e^{C_T}+C_T\nu^{\frac14-\frac\delta2}.
$$
Finally, an integration of \eqref{grou} with respect to time combined with \eqref{lavV} complete the proof.
\end{proof}

\begin{remark}
It is worth pointing out that when the domain $\Omega$ is a torus, then the estimate holds as reported in the original \cite[Thm 2.6]{DPG2}.
Moreover, we observe that the same kind of mistake was made in the proof of \cite[Thm.2.7]{BCG}. Also in that case, the statement has to
be modified according to \eqref{lavV}.
\end{remark}

\section*{Acknowledgment} 
{The work of the first author was supported by the Engineering and Physical Sciences Research Council [EP/L015811/1]. The second author is member of the Gruppo Nazionale per l'Analisi Matematica, la Probabilit\`{a} e le loro Applicazioni (GNAMPA) and of the Istituto Nazionale di Alta Matematica (INdAM)}
The authors thank Andrea Giorgini for pointing out the error and contribute to fix it.



\begin{thebibliography}{99}


\bibitem{BCG}(MR3351441)
\newblock S. Bosia, M. Conti and M. Grasselli,
\newblock On the Cahn-Hilliard-Brinkman System,
\newblock \emph{Commun. Math. Sci.}, \textbf{13} (2015), 1541-1567.

\bibitem{BOY}
\newblock F. Boyer and P. Fabrie,
\newblock \emph{Mathematical tools for the study of the incompressible Navier-Stokes equations and related models},
\newblock Appl. Math. Sci. \textbf{183}. Springer, New York, 2013.

\bibitem{DPG2}
\newblock F. Della Porta and M. Grasselli,
\newblock On the nonlocal Cahn-Hilliard-Brinkman and Cahn-Hilliard-Hele-Shaw systems,
\newblock \emph{Comm. Pure Appl. Anal.}, \textbf{15} (2016), 299-317.


\bibitem{Sobolev}
\newblock E. Di Nezza, G. Palatucci and E. Valdinoci,
\newblock Hitchhiker's guide to the fractional Sobolev spaces,
\newblock \emph{Bull. Sci. Math.}, \textbf{136} (2012), 521-573.

\bibitem{TemamCorrector}
\newblock J.P. Kelliher, R. Temam and X. Wang,
\newblock Boundary layer associated with the Darcy-Brinkman-Boussinesq model for convection in porous media,
\newblock \emph{Phys.~D}, \textbf{240} (2011), 619-628.


\end{thebibliography}
\end{document}